\title{Vectorial analogues\\ of Cauchy's surface area formula}
\author{Daniel Hug and Rolf Schneider}
\date{}
\newcommand{\R}{{\mathbb R}}
\newcommand{\K}{{\mathcal K}}
\newcommand{\N}{{\mathbb N}}
\newcommand{\Ha}{\mathcal{H}}
\newcommand{\B}{\mathcal{B}}
\newcommand{\D}{{\rm d}}
  \newcommand{\inn}{{\rm int}\,}
  \renewcommand{\dim}{{\rm dim}\,}
  \newcommand{\relint}{{\rm relint}\,}
  \newcommand{\relbd}{{\rm relbd}\,}
\newtheorem{theorem}{Theorem}
\newtheorem{lemma}{Lemma}
\newtheorem{corollary}{Corollary}
\begin{document}
\maketitle

\begin{abstract}
Cauchy's surface area formula says that for a convex body $K$ in $n$-dimensional Euclidean space the mean value of the $(n-1)$-dimensional volumes of the orthogonal projections of $K$ to hyperplanes is a constant multiple of the surface area of $K$. We prove an analogous formula, with the volumes of the projections replaced by their moment vectors. This requires to introduce a new vector-valued valuation on convex bodies.
\\[2mm]
{\em Keywords: Cauchy's surface area formula, moment vector, tensor-valued valuation}  \\[1mm]
2020 Mathematics Subject Classification: Primary 52A20, Secondary 53C65
\end{abstract}

\section{Introduction}\label{sec1}

Cauchy's surface area formula is probably the first result in a branch later named integral geometry. The $n$-dimensional version of Cauchy's formula says that
$$ S(K) = \frac{1}{\kappa_{n-1}} \int_{S^{n-1}} V_{n-1}(K|u^\perp)\,\D u$$
for each convex body $K$ (a nonempty, compact, convex set) in Euclidean space $\R^n$. Here $K|u^\perp$ is the image of $K$ under orthogonal projection to the linear subspace $u^\perp$ orthogonal to the unit vector $u$. By $S(K)$ we denote the surface area of $K$ and by $V_k$ the $k$-dimensional volume. Further, $\int_{S^{n-1}}f(u)\, \D u$ always denotes integration over the unit sphere $S^{n-1}$ with respect to the spherical Lebesgue measure, and $\kappa_k$ is the volume of the $k$-dimensional unit ball ($\omega_k=k\kappa_k$ is its surface area). For proofs of more general projection formulas we refer to \cite[p. 301]{Sch14} and \cite[p. 222]{SW08}  or \cite[Chap.~5]{HW20}.

Classical integral geometry, as it can be found, e.g., in the books by Hadwiger \cite[Chap. 6]{Had57} and Santal\'o \cite{San76} (see also \cite[Sect. 4.4]{Sch14}), exhibits, for example, intersection theorems such as the kinematic formulas and the Crofton formulas, mainly for real-valued valuations on convex bodies. Integral geometric formulas for vector-valued functionals were investigated in \cite{HS71, Mul53, Sch72}. An extensive theory of integral geometric intersection formulas for tensor valued valuations, also with values in spaces of measures, was developed in \cite{BH18, HSS08, HW17, HW18a, HW18b, Sch00, SS02}. We also refer to the survey article \cite{BH17}.

While the classical integral geometry of real-valued functionals also considered projections, the mentioned investigations of tensor valuations are restricted to intersections. It seems that not even such a simple question as the following vector-valued analogue of Cauchy's surface area formula has been answered. First we recall (e.g., from \cite[Sect. 5.4.1]{Sch14}) that for a convex body $K\subset\R^n$ and for $k\in\{\dim K,\dots,n\}$ the moment vector $z_{k+1}(K)$ is defined by
$$ z_{k+1}(K):= \int_K x\,\Ha^k(\D x),$$
where $\Ha^k$ is the $k$-dimensional Hausdorff measure (the lower index indicates the degree of homogeneity). The question is then whether the integral
$$ \int_{S^{n-1}} z_n(K|u^\perp)\,\D u$$
can be expressed by familiar functionals of $K$. It turns out that, compared to the cases dealing with intersections, additional tensor valuations are required. After introducing these, we shall give an answer to the question in Theorem \ref{T2.1}.

\section{Preliminaries, and a result}\label{sec2}

We work in $n$-dimensional Euclidean vector space $\R^n$ (with origin $o$) and use its scalar product $\langle\cdot\,,\cdot\rangle$ to identify the space with its dual space. Thus, we identify a vector $x\in\R^n$ with the linear functional $\langle x,\cdot\rangle$. By ${\mathbb T}^r$ we denote the real vector space (with its usual topology) of symmetric $r$-tensors (tensors of rank $r\in\N_0$) on $\R^n$. The elements of ${\mathbb T}^r$ are symmetric $r$-linear functionals on $\R^n$. The symmetric tensor product of $a\in{\mathbb T}^r$ and $b\in{\mathbb T}^s$ is denoted by $a\cdot b = ab$ and is an element of ${\mathbb T}^{r+s}$. In particular, for $x\in\R^n$ we write $x^r:= x\cdots x$ ($r$ factors); we then have $x^r(a_1,\dots,a_r)=\langle x,a_1\rangle \cdots \langle x,a_r\rangle$ for $a_1,\dots,a_r\in\R^n$.

Let $K\in\K^n$, where $\K^n$ denotes the set of convex bodies in $\R^n$. Its volume can be written as
\begin{equation}\label{2.1}
V_n(K)= \int_K\D x,
\end{equation}
where $\int f(x)\,\D x$ indicates integration with respect to the Lebesgue measure on $\R^n$. This motivates one to define a symmetric $r$-tensor by
$$ \Psi_r(K):= \frac{1}{r!} \int_Kx^r\D x,\quad K\in\K^n.$$
The factor before the integral is for convenience, and we have $\Psi_1(K) = z_{n+1}(K)$. For $z_{n+1}$, we get the polynomial expansion
$$ z_{n+1}(K+\lambda B^n) = \sum_{j=0}^n\binom{n}{j} \lambda^j q_j(K)$$
(see \cite[(5.95)]{Sch14}), where $B^n$ is the unit ball of $\R^n$ and $\lambda\ge 0$. The vector-valued coefficients $q_j$ can be represented by
$$ q_j(K) = \frac{1}{n} \int_{\partial K} x\,C_{n-j}(K,\D x)$$
for $j\ge 1$ (whereas $q_0=z_{n+1}$). See \cite[(5.98)]{Sch14}, and note that the curvature measure $C_m(K,\cdot)$ is concentrated on the boundary of $K$. In particular (by \cite[(4.31)]{Sch14}),  if $K$ has nonempty interior, then
\begin{equation}\label{2.2}
q_1(K) = \frac{1}{n} \int_{\partial K} x\,\Ha^{n-1}(\D x).
\end{equation}

Let $\K^n_o$ denote the set of convex bodies $K\in\K^n$ with $o\in \inn K$. Alternatively to (\ref{2.1}), for $K\in\K^n_o$ the volume of $K$ can also be written as the integral of the cone-volume measure of $K$ over the unit sphere $S^{n-1}$.  The cone-volume measure of $K\in\K^n_o$ is defined by
$$ V_K(\omega) := \Ha^n\left(\bigcup_{x\in\tau(K,\omega)} [o,x]\right),\quad \omega\in \B(S^{n-1}).$$
Here $\tau(K,\omega)$ is the set of boundary points of $K$ at which there exists an outer unit normal vector of $K$ belonging to $\omega$, $[o,x]$ is the closed segment with endpoints $o$ and $x$, and $\B(S^{n-1})$ denotes the $\sigma$-algebra of Borel sets in $S^{n-1}$.
It is known (see, e.g., \cite[p. 501]{Sch14}) that
\begin{align*}
 V_K(\omega)&= \frac{1}{n} \int_\omega h_K(u)\, S_{n-1}(K,\D u)\\
 &=\frac{1}{n}\int_{\R^n\times \omega} \langle x,u\rangle  \, \Theta_{n-1}(K,\D (x,u)),
 \end{align*}
where $h_K$ is the support function of $K$, $S_{n-1}(K,\cdot)$ is its surface area measure and $\Theta_{n-1}(K,\cdot)$ is a support measure of $K$ (see \cite[Sect.~4.2]{Sch14}). For the last equality, we refer to \cite[(4.11)]{Sch14} and the simple observation that $h_K(u)=\langle x,u\rangle$ for $(x,u)$ in the support of the measure $\Theta_{n-1}(K,\cdot)$. From these representations it is clear that $V_K$ is a measure-valued valuation (see \cite[Thm. 4.2.1]{Sch14}). Moreover, the integral representations are well defined for all convex bodies.

The preceding discussion suggests to define a symmetric $r$-tensor functional by
$$ \Upsilon_r(K) := \int_{S^{n-1}} u^r\, V_K(\D u),\quad K\in\K^d_o.$$
For general convex bodies $K\in\K^d$, it is consistent to define $\Upsilon_r:\K^n\to{\mathbb T}^r$ by
\begin{align}\label{ups}
\Upsilon_r(K):&= \frac{1}{n}\int_{S^{n-1}} h_K(u)u^r\,S_{n-1}(K,\D u)\\
&=\frac{1}{n}\int_{\R^n\times S^{n-1}} \langle x,u\rangle u^r \, \Theta_{n-1}(K,\D (x,u)).\nonumber
\end{align}

Just as the $\Psi_r$, also the $\Upsilon_r$ are rotation covariant, continuous tensor-valued valuations. But the translation behavior is different. We know (see \cite[(5.104)]{Sch14}) that, for $t\in\R^n$,
$$ \Psi_r(K+t) = \sum_{j=0}^r \frac{1}{j!} \Psi_{r-j}(K)t^j,$$
where $\Psi_{r-j}(K)t^j$ is a symmetric tensor product. On the other hand, if we define a translation invariant $r$-tensor by
$$ \Xi_r(K):= \frac{1}{n} \int_{S^{n-1}} u^r\,S_{n-1}(K,\D u),$$
for $K\in\K^n$, then
$$ \Upsilon_r(K+t) = \Upsilon_r(K) + \Xi_{r+1}(K)(t),$$
where the symmetric $(r+1)$-tensor $\Xi_{r+1}(K)$ is applied to the vector $t$, which results in a tensor of rank $r$.

Considering that $\Upsilon_0(K)$ is the volume of $K$ and that
$$ \frac{1}{n}\int_{S^{n-1}} h_{K_1}(u)\, S(K_2,\dots,K_n,\D u)  = V(K_1,K_2,\dots,K_n)$$
(where $S(K_2,\dots,K_n,\cdot)$ is the mixed area measure of $K_2,\dots,K_n$) is the mixed volume of $K_1,\dots,K_n$ and hence is symmetric in its arguments, one may ask whether also
\begin{equation}\label{eqnosym}
 \int_{S^{n-1}} h_{K_1}(u)u^r S(K_2,\dots,K_n,\D u)
 \end{equation}
is symmetric in $K_1,\dots,K_n$. That this is in general not the case, can be seen from the fact that, for $r=1$, the vector
$$ \int_{S^{n-1}} h_K(u)u\, S(B^n,\dots,B^n,\D u)$$
is proportional to the Steiner point of $K$ (as follows from \cite[(1.31)]{Sch14}), whereas
$$ \int_{S^{n-1}} h_{B^n}(u)u \,S(K,B^n,\dots,B^n,\D u)$$
is always equal to $o$, by \cite[(5.30)]{Sch14}.

Since \eqref{eqnosym} is not symmetric in $K_1,\ldots,K_n$, we introduce a symmetric tensor functional $\Upsilon^{(r)}:(\K^n)^n\to {\mathbb T}^r$ by
$$
\Upsilon^{(r)}(K_1,\ldots,K_n):=\frac{1}{n}\sum_{i=1}^n\frac{1}{n}  \int_{S^{n-1}} h_{K_i}(u)u^r \,S(K_1,\dots,\check{K_i},\ldots ,K_n,\D u),
$$
where $\check{K_i}$ means that $K_i$ is omitted.

We have mentioned tensor valuations of rank higher than necessary for the following, since we hope to come back to them later.

We can now state the vectorial counterpart to Cauchy's surface area formula.
\begin{theorem}\label{T2.1}
For $K\in\K^n$,
$$ \int_{S^{n-1}} z_n(K|u^\perp)\,\D u = \frac{n\kappa_{n-1}}{n+1}(nq_1(K)-\Upsilon_1(K)).$$
\end{theorem}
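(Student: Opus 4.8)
The plan is to reduce the left-hand side to an integral over $\partial K$ and then interchange the order of integration. Assume first that $K$ has nonempty interior. For $\Ha^{n-1}$-almost every $x\in\partial K$ there is a unique outer unit normal $v(x)$, and the orthogonal projection $\pi_u(x):=x-\langle x,u\rangle u$ onto $u^\perp$ maps the part of $\partial K$ where $\langle v(x),u\rangle>0$ bijectively (up to a null set) onto $K|u^\perp$, with local area factor $\langle v(x),u\rangle$; the same holds for the part where $\langle v(x),u\rangle<0$, with factor $-\langle v(x),u\rangle$. Since each interior point of $K|u^\perp$ is covered once from each side, the area formula applied to $f(y)=y$ gives the moment representation
$$ z_n(K|u^\perp)=\frac12\int_{\partial K}\pi_u(x)\,|\langle v(x),u\rangle|\,\Ha^{n-1}(\D x),$$
which for $f\equiv1$ is exactly the covering identity underlying Cauchy's formula.

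Next I would integrate over $u\in S^{n-1}$ and apply Fubini (the integrand is bounded and $\partial K$ is compact), reducing the problem to the inner integral
$$ I(x):=\int_{S^{n-1}}\bigl(x-\langle x,u\rangle u\bigr)\,|\langle v(x),u\rangle|\,\D u,$$
for fixed $x$ with normal $v=v(x)$. Writing $J:=\int_{S^{n-1}}\langle x,u\rangle u\,|\langle v,u\rangle|\,\D u$, the invariance of the measure $|\langle v,u\rangle|\,\D u$ under rotations fixing $v$ forces $J=\alpha\langle v,x\rangle v+\beta\bigl(x-\langle v,x\rangle v\bigr)$ with constants $\alpha,\beta$ independent of $x,v$. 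Testing $x=v$ gives $\alpha=\int_{S^{n-1}}|\langle v,u\rangle|^3\,\D u$, while taking the trace gives $\alpha+(n-1)\beta=\int_{S^{n-1}}|\langle v,u\rangle|\,\D u$. Both are standard beta-integrals: with $c_k:=\int_{S^{n-1}}|\langle v,u\rangle|^k\,\D u$ one has $c_1=2\kappa_{n-1}$ and $c_3=\tfrac{4\kappa_{n-1}}{n+1}$, whence $\alpha=\tfrac{4\kappa_{n-1}}{n+1}$ and $\beta=\tfrac{2\kappa_{n-1}}{n+1}$. Substituting and simplifying,
$$ I(x)=c_1 x-J=\frac{2\kappa_{n-1}}{n+1}\bigl(nx-\langle v(x),x\rangle v(x)\bigr).$$

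Now I would assemble the pieces: the factor $\tfrac12$ cancels, and it remains to identify the two boundary integrals. By (\ref{2.2}), $\int_{\partial K}x\,\Ha^{n-1}(\D x)=nq_1(K)$. For the second, since $S_{n-1}(K,\cdot)$ is the image measure of $\Ha^{n-1}$ on $\partial K$ under the (a.e. defined) Gauss map $x\mapsto v(x)$, and $h_K(v(x))=\langle x,v(x)\rangle$ on the normal bundle,
$$ \int_{\partial K}\langle x,v(x)\rangle v(x)\,\Ha^{n-1}(\D x)=\int_{S^{n-1}}h_K(u)u\,S_{n-1}(K,\D u)=n\,\Upsilon_1(K).$$
Combining these yields $\int_{S^{n-1}}z_n(K|u^\perp)\,\D u=\frac{n\kappa_{n-1}}{n+1}\bigl(nq_1(K)-\Upsilon_1(K)\bigr)$, as claimed; I would check the constant and the combination $nq_1-\Upsilon_1$ on a translated ball, where both sides reduce to $(n-1)\kappa_{n-1}\kappa_n\,t$.

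I expect the main obstacle to be the rigorous justification of the moment representation for arbitrary convex bodies rather than smooth ones: one must argue that the non-smooth part of $\partial K$ is $\Ha^{n-1}$-negligible, that $\pi_u$ is a.e. two-to-one onto $K|u^\perp$ with the stated area factor, and control the boundary set where $\langle v,u\rangle=0$. Everything else—the Fubini step, the symmetry ansatz for $J$, and the evaluation of $c_1,c_3$—is routine. Finally, the case $\dim K<n$ should follow by continuity, since both sides are continuous on $\K^n$ and full-dimensional bodies are dense.
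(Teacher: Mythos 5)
Your proof is correct, and its skeleton coincides with the paper's: reduce $z_n(K|u^\perp)$ to an integral over $\partial K$ weighted by $|\langle \nu_K(y),u\rangle|$ (each relative interior point of $K|u^\perp$ is covered exactly twice, whence your factor $\tfrac12$), interchange the integrations by Fubini, and identify the two resulting boundary integrals as $nq_1(K)$ (by (\ref{2.2})) and $n\Upsilon_1(K)$ (via the fact that $S_{n-1}(K,\cdot)$ is the image of $\Ha^{n-1}$ under the Gauss map, together with (\ref{ups})). The one genuine difference is how the inner spherical integral is evaluated. The paper's Lemma \ref{L3.1} computes $\int_{S^{n-1}}|\langle v,u\rangle|\,u^2\,\D u=\frac{2\kappa_{n-1}}{n+1}(v^2+Q)$ by explicitly slicing the sphere orthogonally to $v$, which requires the equatorial integral $\int_{S^{n-1}\cap v^\perp}w^2\,\Ha^{n-2}(\D w)$ quoted from \cite[(24)]{SS02}. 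You instead observe that the symmetric matrix $M=\int_{S^{n-1}}|\langle v,u\rangle|\,u\otimes u\,\D u$ commutes with the stabilizer of $v$, hence has the form $\alpha P_v+\beta(I-P_v)$, where $P_vx=\langle v,x\rangle v$, and you pin down $\alpha=c_3=\frac{4\kappa_{n-1}}{n+1}$ and $\beta=\frac{2\kappa_{n-1}}{n+1}$ from two one-dimensional Beta integrals ($c_3$ and the trace $c_1=2\kappa_{n-1}$). This is the paper's lemma in disguise, since $v^2+Q=2P_v+(I-P_v)$, so $\alpha=\frac{4\kappa_{n-1}}{n+1}$ and $\beta=\frac{2\kappa_{n-1}}{n+1}$ match exactly; your final bookkeeping and the sanity check on a translated ball (both sides equal $(n-1)\kappa_{n-1}\kappa_n t$) are also correct. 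What your route buys is self-containedness and brevity: no external citation, and only scalar integrals to evaluate. The one small repair needed: for $n=2$ there are no nontrivial rotations fixing $v$, so the invariance argument as stated is empty there; you should invoke the full stabilizer of $v$ in the orthogonal group (the reflection through $\R v$ also preserves the measure $|\langle v,u\rangle|\,\D u$), which forces the diagonal form of $M$ in every dimension $n\ge 2$.
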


The functional $q_1$ is a special mixed moment vector, that is, we have
$$\frac{n}{n+1}q_1(K)=z(K[n],B^n).$$
The mixed moment vectors $z:(\K^n)^{n+1}\to\R^n$ are symmetric and multilinear in each of their $n+1$ arguments (see \cite[Section 5.4.1]{Sch14}. The choice $K=\lambda_1K_1+\dots+\lambda_nK_n$ with $\lambda_1,\ldots,\lambda_n\ge 0$ in Theorem \ref{T2.1}, polynomial expansion and comparison of the coefficients of  $\lambda_1\cdots\lambda_n$ yields the following consequence.

\begin{corollary}\label{C2.2}
Let $K_1,\ldots,K_n\in\K^n$. Then
\begin{align*}
 &\int_{S^{n-1}} z_n(K_1|u^\perp,\ldots,K_n|u^\perp)\,\D u\\
 &=n\kappa_{n-1}\left(z(K_1,\ldots,K_n,B^n)-\frac{1}{n+1}\Upsilon^{(1)}(K_1,\ldots,K_n)\right).
\end{align*}
\end{corollary}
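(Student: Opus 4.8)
The plan is to deduce Corollary \ref{C2.2} from Theorem \ref{T2.1} by the multilinearization sketched in the text: substitute $K=\lambda_1K_1+\dots+\lambda_nK_n$ with $\lambda_1,\dots,\lambda_n\ge 0$, observe that each side of Theorem \ref{T2.1} becomes a polynomial of degree $n$ in $(\lambda_1,\dots,\lambda_n)$, and compare the coefficients of the monomial $\lambda_1\cdots\lambda_n$. Three facts make both sides polynomial: orthogonal projection is Minkowski linear, so $(\sum_i\lambda_iK_i)|u^\perp=\sum_i\lambda_i(K_i|u^\perp)$; within the hyperplane $u^\perp$ the moment vector $z_n$ of an $(n-1)$-dimensional body polarizes into the symmetric multilinear mixed moment vector (normalized so that its diagonal recovers $z_n$); and $h_{\sum_i\lambda_iK_i}=\sum_i\lambda_ih_{K_i}$ while the surface area measure expands multilinearly into mixed area measures. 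Since all functionals are continuous and defined for arbitrary convex bodies, the resulting polynomial identity is valid without any general-position restriction, and integration over $S^{n-1}$ commutes with the (finite) expansion.

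First I would treat the left-hand side and the $q_1$-term, where the bookkeeping is routine. On the left, polarizing $z_n$ gives $z_n((\sum_i\lambda_iK_i)|u^\perp)=\sum_{i_1,\dots,i_n}\lambda_{i_1}\cdots\lambda_{i_n}z_n(K_{i_1}|u^\perp,\dots,K_{i_n}|u^\perp)$, so the coefficient of $\lambda_1\cdots\lambda_n$ equals $n!\,z_n(K_1|u^\perp,\dots,K_n|u^\perp)$, which I then integrate over $S^{n-1}$. For the first term on the right I would use the identity $q_1(K)=\frac{n+1}{n}z(K[n],B^n)$ stated after the theorem together with the symmetry and multilinearity of $z$; expanding the $n$ copies of $\sum_i\lambda_iK_i$ shows that the coefficient of $\lambda_1\cdots\lambda_n$ in $nq_1$ equals $(n+1)\,n!\,z(K_1,\dots,K_n,B^n)$.

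The delicate term is $\Upsilon_1$, and I expect this to be the main obstacle: here both the support function (homogeneous of degree $1$) and the surface area measure (degree $n-1$) carry the bodies, so $\lambda_1\cdots\lambda_n$ is produced in $n$ different ways according to which body is assigned to the support function. Inserting $h_{\sum_j\lambda_jK_j}=\sum_j\lambda_jh_{K_j}$ and $S_{n-1}(\sum_i\lambda_iK_i,\cdot)=\sum_{i_1,\dots,i_{n-1}}\lambda_{i_1}\cdots\lambda_{i_{n-1}}S(K_{i_1},\dots,K_{i_{n-1}},\cdot)$ into $\Upsilon_1$ and collecting $\lambda_1\cdots\lambda_n$, one index is singled out for $h$ while the remaining $n-1$ indices fill the mixed area measure in $(n-1)!$ symmetric ways, yielding
$$
\frac{(n-1)!}{n}\sum_{i=1}^n\int_{S^{n-1}}h_{K_i}(u)u\,S(K_1,\dots,\check{K_i},\dots,K_n,\D u).
$$
Comparing with the definition of $\Upsilon^{(1)}$, which carries the extra factor $1/n$ together with the same sum over $i$, identifies this coefficient as $n!\,\Upsilon^{(1)}(K_1,\dots,K_n)$; the crux is to check that the constants $(n-1)!$, $n$ and the two factors $1/n$ in $\Upsilon^{(1)}$ combine to exactly the common factor $n!$.

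Finally I would match the three coefficients of $\lambda_1\cdots\lambda_n$ in Theorem \ref{T2.1}, observe that the universal factor $n!$ cancels, and simplify $\frac{n\kappa_{n-1}}{n+1}\big[(n+1)z(K_1,\dots,K_n,B^n)-\Upsilon^{(1)}(K_1,\dots,K_n)\big]$ to $n\kappa_{n-1}\big(z(K_1,\dots,K_n,B^n)-\frac{1}{n+1}\Upsilon^{(1)}(K_1,\dots,K_n)\big)$, which is the asserted formula.
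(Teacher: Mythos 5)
Your proposal is correct and follows exactly the route the paper indicates (and leaves to the reader): substitute $K=\lambda_1K_1+\dots+\lambda_nK_n$ in Theorem \ref{T2.1}, expand both sides multilinearly, and compare the coefficients of $\lambda_1\cdots\lambda_n$. All the bookkeeping checks out, including the delicate constant for the $\Upsilon_1$-term, where $\frac{(n-1)!}{n}\cdot n^2=n!$ correctly absorbs the two factors $1/n$ in the definition of $\Upsilon^{(1)}$, so the common factor $n!$ cancels and yields the stated formula.
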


\section{Proof of Theorem \ref{T2.1}}\label{sec3}

Both sides of the asserted equation depend continuously on $K$. Hence we can assume that  $K\in\K^n$ has nonempty interior.
Let $\partial'K$ denote the set of all $x\in\partial K$ at which the outer unit normal vector $\nu_K(x)$ of $K$ at $x$ is unique. Then $\partial'K$ is a Borel set, $\nu_K$ is continuous on $\partial'K$ and $\partial K\setminus \partial'K$ has $\Ha^{n-1}$-measure zero.
  Moreover, for $y\in \partial'K$ and $u\in S^{n-1}$ the projection map $p_u:\partial K\to u^\perp$, $y\mapsto y-\langle y,u\rangle u$, has the Jacobian $Jp_u(y)=|\langle \nu_K(y),u\rangle|$. For $x\in \relint(K|u^\perp)$ we have $p_u^{-1}(\{x\})=\{y_u^-(K,x),y_u^+(K,x)\}$ and $p_u(y^\pm_u(K,x))=x$.
 Therefore, we can apply the coarea formula and then Fubini's theorem to obtain
\begin{eqnarray}
2\int_{S^{n-1}} z_n(K|u^\perp)\,\D u &=& 2\int_{S^{n-1}} \int_{K|u^\perp} x\,\Ha^{n-1}(\D x)\,\D u\nonumber\\
&=&\int_{S^{n-1}}\int_{K|u^\perp}\int_{p_u^{-1}(\{x\})}p_u(y)\, \mathcal{H}^0(\D y)\, \mathcal{H}^{n-1}(\D x)\,\D u\nonumber\nonumber\\
&=& \int_{S^{n-1}} \int_{\partial K} |\langle \nu_K(y),u\rangle|(y-\langle y,u\rangle u)\,\Ha^{n-1}(\D y)\,\D u\nonumber\\
&=&\int_{\partial K} y \int_{S^{n-1}}  |\langle \nu_K(y),u\rangle|\,\D u\,\Ha^{n-1}(\D y)\nonumber\\
&&  - \int_{\partial K} \int_{S^{n-1}} |\langle  \nu_K(y),u\rangle|\langle y,u\rangle u\,\D u\,\Ha^{n-1}(\D y).\label{3.1}
\end{eqnarray}
For the first integral in (\ref{3.1}) we can use that
$$ \int_{S^{n-1}} |\langle v,u\rangle|\,\D u= 2\kappa_{n-1}$$
for all $v\in S^{n-1}$ and obtain
$$ \int_{\partial K} y \int_{S^{n-1}}  |\langle \nu_K(y),u\rangle|\,\D u\,\Ha^{n-1}(\D y) = 2n\kappa_{n-1} q_1(K),$$
by (\ref{2.2}). For the second integral, we need a lemma. For this, we denote by $Q$ the metric $2$-tensor on $\R^n$, that is, $Q(x,y)=\langle x,y\rangle$ for $x,y\in\R^n$.

\begin{lemma}\label{L3.1}
Let $n\ge 2$ and $v\in S^{n-1}$. Then
$$ \int_{S^{n-1}} |\langle v,u\rangle|u^2\,\D u= \frac{2\kappa_{n-1}}{n+1}(v^2+Q).$$
\end{lemma}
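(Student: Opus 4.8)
The plan is to exploit the rotational symmetry of the integrand about the axis $v$. Write the tensor $T:=\int_{S^{n-1}}|\langle v,u\rangle|u^2\,\D u$ as a symmetric bilinear form, so that $T(a,b)=\int_{S^{n-1}}|\langle v,u\rangle|\langle u,a\rangle\langle u,b\rangle\,\D u$ for $a,b\in\R^n$. The integrand is invariant under every rotation of $\R^n$ fixing $v$, and the only symmetric $2$-tensors invariant under all these maps are the linear combinations of $v^2$ and $Q$. Hence there are scalars $\alpha,\beta$ (depending only on $n$) with $T=\alpha v^2+\beta Q$. Equivalently, one may choose an orthonormal basis whose last vector is $v$ and write $u=(u_1,\dots,u_n)$ with $u_n=\langle v,u\rangle$; the off-diagonal entries of $T$ then vanish by the reflection symmetry $u_i\mapsto-u_i$, and the diagonal entries take only the two values $\int|u_n|u_i^2\,\D u$ according as $e_i\perp v$ or $e_i=v$, which again produces the form $\alpha v^2+\beta Q$.

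To pin down $\alpha$ and $\beta$ I would contract $T$ in two ways. Taking the full trace (contraction with $Q$) and using $|u|^2=1$ on the sphere gives $\operatorname{tr}T=\int_{S^{n-1}}|\langle v,u\rangle|\,\D u=2\kappa_{n-1}$, the same identity already used for the first integral in (\ref{3.1}); since $\operatorname{tr}(\alpha v^2+\beta Q)=\alpha+n\beta$, this yields $\alpha+n\beta=2\kappa_{n-1}$. Evaluating $T$ on the pair $(v,v)$ gives $T(v,v)=\int_{S^{n-1}}|\langle v,u\rangle|^3\,\D u$, whereas $(\alpha v^2+\beta Q)(v,v)=\alpha+\beta$, so $\alpha+\beta=\int_{S^{n-1}}|\langle v,u\rangle|^3\,\D u$.

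Thus everything reduces to the moments $I_s:=\int_{S^{n-1}}|\langle v,u\rangle|^s\,\D u$, which I would compute by slicing $S^{n-1}$ into the spheres $\{u:\langle v,u\rangle=t\}$ of radius $\sqrt{1-t^2}$, $t\in[-1,1]$. This reduces $I_s$ to a one-dimensional integral $I_s=\omega_{n-1}\int_{-1}^1|t|^s(1-t^2)^{(n-3)/2}\,\D t$ with $\omega_{n-1}=(n-1)\kappa_{n-1}$, and the substitution $w=t^2$ turns it into a Beta integral with value $I_s=(n-1)\kappa_{n-1}\,\Gamma\!\big(\tfrac{s+1}{2}\big)\Gamma\!\big(\tfrac{n-1}{2}\big)/\Gamma\!\big(\tfrac{s+n}{2}\big)$. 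For $s=1$ this returns the known value $2\kappa_{n-1}$ (a useful consistency check), and for $s=3$ the identity $\Gamma(\tfrac{n+3}{2})=\tfrac{n+1}{2}\cdot\tfrac{n-1}{2}\,\Gamma(\tfrac{n-1}{2})$ gives $I_3=\frac{4\kappa_{n-1}}{n+1}$. Feeding these into the two linear relations, I obtain $\beta=\frac{I_1-I_3}{n-1}=\frac{2\kappa_{n-1}}{n+1}$ and then $\alpha=I_3-\beta=\frac{2\kappa_{n-1}}{n+1}$, so $T=\frac{2\kappa_{n-1}}{n+1}(v^2+Q)$, as claimed.

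The only genuinely substantive point is the moment computation $I_3=\frac{4\kappa_{n-1}}{n+1}$: the reduction to $\alpha v^2+\beta Q$ is essentially forced by symmetry, and the clean final factor $\tfrac{2}{n+1}$ is exactly what the Gamma-function recursion produces. I expect the care to be needed in bookkeeping the slicing constant $\omega_{n-1}$ and in the Gamma-value manipulations, rather than in any conceptual obstacle.
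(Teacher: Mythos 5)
Your proof is correct, but it takes a genuinely different route from the paper's. The paper computes the tensor integral directly: it uses the same slicing of $S^{n-1}$ along the axis $v$ (writing $u=\tau v+\sqrt{1-\tau^2}\,w$ with $w\in S^{n-1}\cap v^\perp$), expands the tensor square $\bigl(\tau v+\sqrt{1-\tau^2}\,w\bigr)^2$, discards the cross term since $\int_{S^{n-1}\cap v^\perp}w\,\Ha^{n-2}(\D w)=o$, evaluates the two resulting Beta integrals, and---this is exactly the step your argument replaces---quotes the equatorial second-moment formula $\int_{S^{n-1}\cap v^\perp}w^2\,\Ha^{n-2}(\D w)=\kappa_{n-1}(Q-v^2)$ from \cite[(24)]{SS02}. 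You instead use invariance to force the ansatz $T=\alpha v^2+\beta Q$ and pin down the two scalars by two contractions (trace and evaluation at $(v,v)$), which reduces everything to the scalar moments $I_1=2\kappa_{n-1}$ and $I_3=4\kappa_{n-1}/(n+1)$; these you compute with the same slicing/Beta machinery, and your Gamma manipulations and the linear algebra ($\beta=(I_1-I_3)/(n-1)$, $\alpha=I_3-\beta$) check out. What your route buys is self-containedness---no external tensor-moment formula is needed, only scalar integrals---plus an a priori explanation of why the answer must have the form $\alpha v^2+\beta Q$; the paper's route is a more compact direct computation once the cited formula is granted (that formula being, in effect, the same kind of statement your symmetry argument establishes). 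One small caveat: your first formulation of the invariance claim (``the only symmetric $2$-tensors invariant under all \emph{rotations} fixing $v$ are spanned by $v^2$ and $Q$'') fails for $n=2$, where the rotation group fixing $v$ is trivial; but your coordinate version---off-diagonal entries vanish under the reflections $u_i\mapsto-u_i$, and the diagonal entries in directions orthogonal to $v$ agree by the orthogonal maps permuting those axes---uses the full group of orthogonal maps fixing $v$ and covers all $n\ge 2$, so the proof stands as written.
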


\begin{proof}
We use a decomposition of spherical Lebesgue measure to get
\begin{eqnarray}
&& \int_{S^{n-1}} |\langle v,u\rangle|u^2\,\Ha^{n-1}(\D u)\nonumber\\
&& = \int_{-1}^1 \int_{S^{n-1}\cap v^\perp}(1-\tau^2)^{\frac{n-3}{2}}\left|\left\langle v, \tau v+\sqrt{1-\tau^2}\,w\right\rangle\right|\left(\tau v+\sqrt{1-\tau^2}\,w\right)^2\Ha^{n-2}(\D w)\,\D \tau\nonumber\\
&& = \int_{-1}^1 \int_{S^{n-1}\cap v^\perp}(1-\tau^2)^{\frac{n-3}{2}}|\tau|\left(\tau^2v^2+2\tau\sqrt{1-\tau^2}\,vw+(1-\tau^2)w^2\right)\Ha^{n-2}(\D w)\,\D \tau\nonumber\\
&&= \int_{-1}^1 |\tau|\tau^2(1-\tau^2)^{\frac{n-3}{2}}\D\tau\,\omega_{n-1}v^2 + \int_{-1}^1|\tau|(1-\tau^2)^{\frac{n-1}{2}}\D\tau \int_{S^{n-1}\cap v^\perp} w^2\,\Ha^{n-2}(\D w), \label{3.2}
\end{eqnarray}
since
$$ \int_{S^{n-1}\cap v^\perp} w\,\Ha^{n-2}(\D w)=o.$$
Let $B(\cdot\,,\cdot)$ denote the Beta function. Then
\begin{eqnarray}
 \int_{-1}^1 |\tau|\tau^2(1-\tau^2)^{\frac{n-3}{2}}\D\tau &=& 2\int_0^1 \tau^3(1-\tau^2)^{\frac{n-3}{2}}\D\tau = \int_0^1 s(1-s)^{\frac{n-3}{2}}\D s\nonumber\\
&=& B\left(2,\frac{n-1}{2}\right) =\frac{\Gamma(2)\Gamma(\frac{n-1}{2})}{\Gamma(\frac{n+3}{2})} =\frac{4}{n^2-1}\label{3.3}
\end{eqnarray}
and
$$ \int_{-1}^1|\tau|(1-\tau^2)^{\frac{n-1}{2}}\D\tau = \int_0^1(1-s)^{\frac{n-1}{2}}\D s = \frac{2}{n+1}.$$
It is known from \cite[(24)]{SS02} that
\begin{equation}\label{3.4}
\int_{S^{n-1}\cap v^\perp} w^2\,\Ha^{n-2}(\D w) = 2\frac{\omega_{n+1}}{\omega_3}Q_{v^\perp} =\kappa_{n-1}(Q-v^2),
\end{equation}
where $Q_{v^\perp} = Q-v^2$ is the metric tensor of $v^\perp$. Combination of (\ref{3.2})--(\ref{3.4}) yields the assertion.
\end{proof}

Using Lemma \ref{L3.1}, we can write the second integral in (\ref{3.1}) as
\begin{eqnarray*}
&& \int_{\partial K} \int_{S^{n-1}} |\langle u,\nu_K(y)\rangle|\langle y,u\rangle u\,\D u\,\Ha^{n-1}(\D y)\allowdisplaybreaks\\
&& = \int_{\partial K} \int_{S^{n-1}} |\langle u,\nu_K(y)\rangle|u^2(y)\,\D u\,\Ha^{n-1}(\D y)\allowdisplaybreaks\\
&& = \frac{2\kappa_{n-1}}{n+1} \int_{\partial K} (\nu_K(y)^2+Q)(y)\,\Ha^{n-1}(\D y)\\
&& = \frac{2\kappa_{n-1}}{n+1} \int_{\partial K} \langle\nu_K(y),y\rangle\nu_K(y)\,\Ha^{n-1}(\D y) + \frac{2\kappa_{n-1}}{n+1} \int_{\partial K} y\,\Ha^{n-1}(\D y)\\
&& =  \frac{2\kappa_{n-1}}{n+1} \int_{S^{n-1}} h_K(u)u\,S_{n-1}(K,\D u) +  \frac{2n\kappa_{n-1}}{n+1}q_1(K)\\
&& = \frac{2n\kappa_{n-1}}{n+1}\left[\Upsilon_1(K) + q_1(K)\right].
\end{eqnarray*}
We have used that the surface area measure $S_{n-1}(K,\cdot)$ is the image measure of $\Ha^{n-1}$ under the Gauss map, together with the transformation theorem for integrals, and we have again used (\ref{2.2}), as well as (\ref{ups}). Taking both integrals of (\ref{3.1}) together, we complete the proof of Theorem \ref{T2.1}.

\section{Another analogue of Cauchy's formula}\label{sec4}

We mention briefly another way of obtaining an analogue of Cauchy's surface area formula. For this, we consider a convex body $K\in\K^n_o$ and a measurable, bounded function $f:\partial K\to {\mathbb T}^r$. For $x\in K|u^\perp$ the intersection $K\cap (x+\R u)$ is a segment $[y_u^-(K,x),y_u^+(K,x)]$ with boundary points $y_u^-(K,x),y_u^+(K,x)\in\partial K$ satisfying, say, $\langle y_u^-(K,x),u\rangle\le\langle y_u^+(K,x),u\rangle$. Defining the $r$-tensor
$$ F_f(K,u):= \int_{K|u^\perp} f(y_u^+(K,x))\,\Ha^{n-1}(\D x),$$
we state that
\begin{equation}\label{4.1}
\int_{S^{n-1}} F_f(K,u)\,\D u= \kappa_{n-1} \int_{\partial K} f(x)\,\Ha^{n-1}(\D x).
\end{equation}
For $r=0$ and $f=1$ this is Cauchy's surface area formula. While \eqref{4.1} involves  integration over projections, the relation is rather a  statement about real-valued than tensor-valued functions.

For the proof of (\ref{4.1}), we write
\begin{eqnarray*}
\partial_u^\pm K &:=& \{y_u^\pm(K,x): x\in K|u^\perp\}.
\end{eqnarray*}
By the coarea formula we then have
\begin{equation}\label{eqcoa}
\int_{\partial_u^\pm K} f(x)|\langle \nu_K(x),u\rangle|\,\Ha^{n-1}(\D x) 
=\int_{K|u^\perp} f(y_u^\pm(K,x))\,\Ha^{n-1}(\D x).
\end{equation}
Since  $\langle\nu_K(x),u\rangle=0$ for $\mathcal{H}^{n-1}$-almost all $x\in K\cap (\relbd (K|u^\perp) +\R u)$ (here $\relbd$ refers to the boundary relative to $u^\perp$), we have
\begin{equation}\label{eqzero}
 \int_{K\cap (\relbd (K|u^\perp) +\R u) } f(x)|\langle\nu_K(x),u\rangle|\,\Ha^{n-1}(\D x)=0.
\end{equation}
It follows from $y_{-u}^+(K,x)= y_u^-(K,x)$, \eqref{eqcoa} and \eqref{eqzero} that
$$ F_f(K,u) +F_f(K,-u)=   \int_{\partial K} f(x)|\langle\nu_K(x),u\rangle|\,\Ha^{n-1}(\D x).$$
Fubini's theorem gives
\begin{eqnarray*}
\int_{S^{n-1}} F_f(K,u)\,\D u &=& \frac{1}{2} \int_{S^{n-1}} \int_{\partial K} f(x)|\langle \nu_K(x),u\rangle|\,\Ha^{n-1}(\D x)\,\D u\\
&=& \frac{1}{2} \int_{\partial K} f(x)\int_{S^{n-1}}|\langle \nu_K(x),u\rangle|\,\D u\,\Ha^{n-1}(\D x)\\
&=& \kappa_{n-1} \int_{\partial K} f(x)\,\Ha^{n-1}(\D x).
\end{eqnarray*}
This completes the proof of (\ref{4.1}).

\vspace{3mm}

We add a final remark. Tsukerman and Veomett \cite{TV17}, in their Section 3, aim at an extension of Cauchy's formula to moment vectors, but in an irritating way. Their notion $z_n(K_u)$ has different meanings in the formulation of Theorem 3 and in its proof. According to the definitions given on page 927, $z_n(K_u)= \int_{K_u} x \, \Ha^{n-1}(\D x)$, where $K_u$ is a part of the boundary of $K$. But this contradicts the third displayed formula on page 928, which should read
$$ D_{[o,u]}(z_{n+1})(K)= \lim_{\epsilon\to 0^+} \frac{z_{n+1}((K+\epsilon[o,u])\setminus K)}{\epsilon} = (n+1)z(K,\dots,K,[o,u]),$$
and this is different from $z_n(K_u)$. With the notations used above, we have
$$ (n+1)z(K,\dots,K,[o,u])= \int_{K|u^\perp} y_u^+(K,x)\,\Ha^{n-1}(\D x).$$
Therefore, with corrections, the argument on p. 928 of \cite{TV17} can be interpreted to yield the special case $f(x)=x$ of formula (\ref{4.1}).

\bigskip

\noindent
\textbf{Acknowledgements.}
D. Hug was supported by DFG research grant HU 1874/5-1 (SPP 2265).

\noindent Authors' addresses:\\[2mm]
Daniel Hug\\Karlsruhe Institute of Technology (KIT)\\
Department of Mathematics\\D-76128 Karlsruhe,
Germany\\
E-mail: daniel.hug@kit.edu\\[5mm]
Rolf Schneider\\Mathematisches Institut\\
Albert-Ludwigs-Universit{\"a}t\\
D-79104 Freiburg i.~Br., Germany\\
E-mail: rolf.schneider@math.uni-freiburg.de
\vfill

\end{document}